\def\la{\lambda}
\def\a{\alpha}
\def\r{\gamma}
\def\b{\beta}
\def\Z{\mathbb{Z}}
\def\N{\mathbb{N}}
\def\C{\mathbb{C}}
\numberwithin{equation}{section}
\newtheorem{theo}{Theorem}[section]
\newtheorem{defi}[theo]{Definition}
\newtheorem{lemm}[theo]{Lemma}
\newtheorem{prop}[theo]{Proposition}
\newtheorem{clai}{Claim}
\newtheorem{rema}[theo]{Remark}
\begin{document}

\title[Non-weight modules over the BMS-Kac-Moody algebra]{Non-weight modules over the BMS-Kac-Moody algebra}

\author{Qiufan Chen}

\address{Chen: Department of Mathematics, Shanghai Maritime University,
 Shanghai, 201306, China.}\email{chenqf@shmtu.edu.cn}

\author{Cong Guo}

\address{Guo: Department of Mathematics, Shanghai Maritime University,
 Shanghai, 201306, China.}\email{202331010011@stu.shmtu.edu.cn}

\subjclass[2010]{17B10, 17B35, 17B65, 17B68}

\keywords{BMS-Kac-Moody algebra, non-weight module, free module}

\thanks{This work is supported by National Natural Science Foundation of China (Grant Nos. 12271345 and 12361006). }

\begin{abstract} In this paper, we construct and classify a class of  non-weight modules over the BMS-Kac-Moody algebra,  which are free modules of rank one  when restricted to the universal enveloping algebra of the Cartan subalgebra (modulo center).  We give  the classification of such modules.  Moreover, the irreducibility and the isomorphism classes of these modules are determined.
\end{abstract}

\maketitle
\setcounter{tocdepth}{1}\tableofcontents
\begin{center}
\end{center}

\section{Introduction}

Throughout the paper, we denote by $\C ,\,\C^*,\,\Z,\,\Z^*,\,\N,\,\Z_+$ the sets of complex numbers, nonzero complex numbers, integers,  nonzero integers, nonnegative integers and positive integers, respectively. All vector spaces  are assumed to be  over $\C$. For a Lie algebra $\mathfrak{g}$, we use $U(\mathfrak{g})$ to denote the universal enveloping algebra of $\mathfrak{g}$. Let $\C[t]$ be the polynomial algebra in variable $t$ and  $\C[s,t]$ the polynomial algebra in  variables $s$ and $t$, respectively.

Infinite-dimensional symmetries play a prominent role in different areas of physics. It was discovered that the asymptotic symmetry group of three-dimensional Einstein gravity is generated by the BMS (Bondi-Metzner-Sachs) algebra, denoted by $\hat{\mathfrak B}$, which is an infinite-dimensional Lie algebra with a basis $\{L_m, M_m\mid m\in\Z\}$ and defining relations
\begin{equation*}
\aligned
&[L_m, L_n]=(n-m)L_{m+n}+\frac{C_L}{12}(m^3-m)\delta_{m+n,0},\\
&[L_m,M_n]=(n-m)M_{m+n}+\frac{C_M}{12}(m^3-m)\delta_{m+n,0},\\
&[M_m,M_n]=0,\quad \forall m, n\in\Z.
\endaligned
\end{equation*}
These $L_m$ and $M_m$ are called super-rotations and super-translations at quantum level, respectively. Observe that $\hat{\mathfrak B}$ is exactly  the $W(2,2)$ algebra, which was introduced in \cite{ZD} for the study of the classification of vertex operator algebras generated by weight 2 vectors. The  algebra $\hat{\mathfrak B}$ is crucial in establishing holography theory in asymptotic flat spacetimes, see\cite{B,BF,H}. Let $\mathfrak B$ denote the centerless  $\hat{\mathfrak B}$. Because of an extra anisotropic scaling symmetry, the BMS symmetry has expanded to a symmetry generated by a new type of BMS-Kac-Moody algebra with a nonvanishing $u(1)$ Kac-Moody current\cite{YC}. Motivated by \cite{YC}, an infinite-dimensional BMS-Kac-Moody algebra $\mathfrak L$ with two $u(1)$  Kac-Moody currents without central extensions was introduced in \cite{LS} and many irreducible restricted modules over $\mathfrak L$ were constructed therein. Explicitly, $\mathfrak L$ is a Lie algebra with a basis $\{L_m, M_m, S_m, I_m \mid m\in\Z\}$ and the nontrivial Lie brackets defined by
\begin{equation*}
\aligned
&[L_m,L_n]=(n-m)L_{m+n},\quad [L_m,M_n]=(n-m)M_{m+n},\\
&[L_m,S_n]=nS_{m+n},\quad [L_m,I_n]=nI_{m+n},\\
&[M_m,I_n]=-nS_{m+n},\quad \forall m,n\in\Z.
\endaligned
\end{equation*}
Besides the algebra $\mathfrak B$,  $\mathfrak L$  contains many other well-known subalgebras, such as the Lie subalgebra spanned by $\{L_m\mid m\in\Z\}$ is the Witt algebra and the Lie subalgebra spanned by $\{L_m, I_m\mid m\in\Z\}$ and   $\{L_m, S_m\mid m\in\Z\}$ are two copies of the (centerless) Heisenberg-Virasoro algebra. Moreover, the Lie subalgebra spanned by $\{I_m, S_m\mid m\in\Z\}$ is a commutative ideal of $\mathfrak L$. Notice that the Lie subalgebra $\C L_0\oplus\C M_0$  is  the Cartan subalgebra  (modulo center) of $\mathfrak{L}$. The construction of the Lie algebra $\mathfrak L$ suggests that such an infinite-dimensional algebra is important on its structure and representation theory which also has a connection with the 2d supersymmetric Galilean conformal algebra \cite{MR}, as it is the even part of the 2d supersymmetric Galilean conformal algebra.

In recent years, the study of non-weight modules has attracted more and more mathematicians' attention and $U(\mathfrak{h})$-free modules constitute an important class of non-weight modules, where $U(\mathfrak{h})$ is the universal
enveloping algebra of the Cartan subalgebra $\mathfrak{h}$. $U(\mathfrak{h})$-free modules (originally defined by an ``opposite condition" relative to weight modules) were constructed first for $\mathfrak{sl}_{n+1}$ by  J. Nilsson \cite{N} and independently by H. Tan and K. Zhao \cite{TZ2}. For a finite dimensional simple Lie algebra, J. Nilsson showed that  $U(\mathfrak{h})$-free modules can exist only when it is of type A or C \cite{N1}. In \cite{TZ}, H. Tan and K. Zhao classified  $U(\mathfrak{h})$-free module of rank one over the Witt algebra. Since then, many authors have constructed $U(\mathfrak{h})$-free modules for various Lie (super) algebras, see\cite{CTZ,CZ,CG,CGLW,CC,CH,CY,HCS,GMZ,YYX}. In the present paper,  we plan to study $U(\C L_0\oplus\C M_0)$-free modules of rank one over $\mathfrak{L}$. These lead to many new examples of irreducible modules over  $\mathfrak{L}$.

This paper is organized as follows. In Section 2, we construct a class of non-weight modules over $\mathfrak{L}$. Furthermore, we determine the isomorphism classes and irreducibility of these modules. In Section 3, we
show that these modules constitute a complete classification of $U(\C L_0\oplus\C M_0)$-free modules of rank one over $\mathfrak{L}$.

\section{Preliminaries}
Fix any $\lambda\in\C^*,\a\in\C$ and $h(t)\in\C[t]$. For any $m\in\Z$, we define
\begin{align*}
h_m(t)=m h(t)-m(m-1)\a\frac{h(t)-h(\a)}{t-\a}.
\end{align*}

For $\lambda\in\C^*,\a\in\C$ and $h(t)\in\C[t]$, denote by $\Phi_{\hat{\mathfrak B}}(\lambda,\a,h(t))=\C [s,t]$. In \cite{CG}, the $\hat{\mathfrak B}$-module structure on $\Phi_{\hat{\mathfrak B}}(\lambda,\a,h(t))$ is given by
\begin{equation*}
\aligned
&L_m\cdot f(s,t)=\lambda^m(s+h_m(t))\big)f(s-m,t)-m\lambda^m(t-m\a)\frac{\partial}{\partial t}\big(f(s-m,t)\big),\\
&M_m\cdot f(s,t)=\lambda^m(t-m\a)f(s-m,t),\\
&C_L\cdot f(s,t)=0,\quad C_M\cdot f(s,t)=0,\quad {\rm where}\  f(s,t)\in\C[s,t].
\endaligned
\end{equation*}

For later use, we need the following known results of $\Phi_{\hat{\mathfrak B}}(\lambda,\a,h(t))$.
\begin{prop}\label{pop1}(cf. \cite{CG,HCS})
Keep notations as above, then the following statements hold.
\begin{itemize}
\item[(1)] Any free $U(\C L_0\oplus\C M_0)$-module of rank one over $\hat{\mathfrak B}$  is isomorphic to $\Phi_{\hat{\mathfrak B}}(\lambda,\a,h(t))$ for $\lambda\in\C^*,\a\in\C, h(t)\in\C[t]$;
\item[(2)] $\Phi_{\hat{\mathfrak B}}(\lambda,\a,h(t))$ is irreducible  if and only if $\a\neq0$;
\item[(3)] $\Phi_{\hat{\mathfrak B}}(\lambda,\a,h(t))\cong \Phi_{\hat{\mathfrak B}}(\lambda^{\prime},\a^{\prime},g(t))\Longleftrightarrow (\lambda,\a,h(t))=(\lambda^{\prime},\a^{\prime},g(t))$.
\end{itemize}
\end{prop}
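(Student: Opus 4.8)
These results are established in \cite{CG,HCS}; here is the line of argument I would follow. For \textbf{(1)}, let $V$ be a free $U(\C L_0\oplus\C M_0)$-module of rank one with generator $v$. Identifying $U(\C L_0\oplus\C M_0)$ with $\C[s,t]$ via $L_0\mapsto s$, $M_0\mapsto t$, we may take $V=\C[s,t]$ with $v=1$, so $L_0,M_0$ act as multiplication by $s,t$. Put $a_m:=L_m\cdot1$, $b_m:=M_m\cdot1\in\C[s,t]$ (so $a_0=s$, $b_0=t$). The ``diagonal'' brackets $[L_m,L_0]=-mL_m$, $[L_m,M_0]=-mM_m$, $[M_m,L_0]=-mM_m$, $[M_m,M_0]=0$ give, in $U(\hat{\mathfrak B})$, the identities $L_mL_0^aM_0^b=(L_0-m)^a(M_0^bL_m-mbM_0^{b-1}M_m)$ and $M_mL_0^aM_0^b=(L_0-m)^aM_0^bM_m$; applying them to $1$ yields the closed formulas $M_m\cdot f(s,t)=b_m(s,t)f(s-m,t)$ and $L_m\cdot f(s,t)=a_m(s,t)f(s-m,t)-mb_m(s,t)\,\partial_t\big(f(s-m,t)\big)$ for every $f\in\C[s,t]$, so the module is completely encoded by $\{a_m\},\{b_m\}$.

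The remaining brackets $[M_m,M_n]=0$, $[L_m,M_n]=(n-m)M_{m+n}$ and $[L_m,L_n]=(n-m)L_{m+n}$ translate, through these formulas, into a system of functional and differential equations for $\{a_m\},\{b_m\}$. A bootstrap from small $|m|$ then shows first --- from $[M_m,M_n]=0$ and the mixed relations --- that each $b_m$ is independent of $s$ with $b_m(t)=\la^m(t-m\a)$ for some $\la\in\C^*,\a\in\C$ (here $\la\ne0$, since otherwise $M_m$ would annihilate $v$ for all $m\ne0$, contradicting $[L_1,M_{-1}]\cdot1=-2M_0\cdot1=-2t$); and then --- from the Witt relations $[L_1,L_{m-1}]=(m-2)L_m$ --- that $a_m(s,t)=\la^m(s+h_m(t))$, where $h\in\C[t]$ is defined by $a_1=\la(s+h(t))$ and $h_m$ is forced to equal the polynomial introduced before Proposition \ref{pop1} (the identity $(t-\a)k'(t)=h'(t)-k(t)$ for $k(t):=\frac{h(t)-h(\a)}{t-\a}$ is exactly what makes $h_m$ solve the recursion). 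Matching with the action displayed in this section gives $V\cong\Phi_{\hat{\mathfrak B}}(\la,\a,h(t))$. I expect this bootstrap --- pinning down the precise $\a$-dependence of $h_m$ --- to be the main obstacle; the rest is formal.

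For \textbf{(2)}, assume first $\a=0$, so $h_m=mh$ and $M_m\cdot f=\la^m t\,f(s-m,t)$. A one-line check with the closed formulas shows $t\,\C[s,t]$ is stable under all $L_m,M_m$, and being a nonzero proper submodule it makes $\Phi_{\hat{\mathfrak B}}(\la,0,h(t))$ reducible. Now let $\a\ne0$ and $W\ne0$ a submodule. Pick $0\ne f\in W$, let $n$ be its degree in $s$ and $f_n(t)$ its $s$-leading coefficient. For each $j$ we have $\la^{-j}M_j\cdot f=(t-j\a)f(s-j,t)\in W$; since $(t-m\a)f(s-m,t)$ is a polynomial in $m$ of degree $n+1$ with leading coefficient $(-1)^{n+1}\a f_n(t)$, its $(n+1)$-st finite difference $\sum_{j=0}^{n+1}(-1)^{n+1-j}\binom{n+1}{j}\la^{-j}\big(M_j\cdot f\big)$ equals $(n+1)!(-1)^{n+1}\a f_n(t)$ and lies in $W$; as $\a\ne0$, $W\cap\C[t]\ne0$. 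Since $M_0$ acts by multiplication by $t$ and $W$ is $L_0$-stable, $W\cap\C[t]$ is an ideal $(g(t))$ and $g\,\C[s,t]\subseteq W$. Finally, for $m\ne0$, $\la^{-m}L_m\cdot g-\big(sg+h_m(t)g\big)=-m(t-m\a)g'(t)$, so $(t-m\a)g'(t)\in W\cap\C[t]=(g)$; taking $m=1,2$ and using $\a\ne0$ gives $g\mid g'$, hence $\deg g=0$, so $1\in W$ and $W=\C[s,t]$. Thus $\Phi_{\hat{\mathfrak B}}(\la,\a,h(t))$ is irreducible.

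For \textbf{(3)}, $\Longleftarrow$ is trivial. Conversely, a $\hat{\mathfrak B}$-isomorphism $\psi\colon\Phi_{\hat{\mathfrak B}}(\la,\a,h)\to\Phi_{\hat{\mathfrak B}}(\la',\a',g)$ is in particular $\C[L_0,M_0]$-linear; identifying source and target with $\C[s,t]$, $\psi$ is multiplication by $\psi(1)$, which must be a unit of $\C[s,t]$, i.e.\ a nonzero scalar. Then $\psi(M_1\cdot1)=M_1\cdot\psi(1)$ gives $\la(t-\a)=\la'(t-\a')$ and $\psi(L_1\cdot1)=L_1\cdot\psi(1)$ gives $\la(s+h(t))=\la'(s+g(t))$; comparing coefficients yields $\la=\la'$, $\a=\a'$, $h=g$.
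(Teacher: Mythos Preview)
The paper does not supply its own proof of this proposition: it is stated with the attribution ``(cf.\ \cite{CG,HCS})'' and no argument follows. There is therefore nothing in the paper to compare your proposal against directly.

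That said, your sketch is sound and follows the standard route taken in the cited references. The closed formulas for $L_m\cdot f$ and $M_m\cdot f$ are correct (the identity $L_mL_0^aM_0^b=(L_0-m)^a(M_0^bL_m-mbM_0^{b-1}M_m)$ holds as stated), and your bootstrap in part (1) is exactly the mechanism behind the classification in \cite{CG}. Your irreducibility argument in part (2) --- reducing to $W\cap\C[t]$ via finite differences of the $M_j$-translates, then forcing $g\mid g'$ from two values of $m$ --- is clean and correct; the paper's own later argument for the larger algebra (Proposition 2.3) relies on the extra generators $I_m,S_m$ rather than on this finite-difference trick, but both reach the same endpoint. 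Your part (3) is essentially identical to what the paper does in Proposition 2.4 for the extended module: reduce to $\psi(1)\in\C^*$ and compare the action on $1$.

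One minor remark on part (1): you should also note that the central elements $C_L,C_M$ necessarily act as zero, since $\{1,L_0\cdot1,L_0^2\cdot1,\dots\}$ are linearly independent while $C_L\cdot1=\frac{12}{m^3-m}[L_m,L_{-m}]\cdot1-\frac{24}{m^2-1}L_0\cdot1$ would force $C_L\cdot1\in\C[s,t]$ to satisfy infinitely many constraints unless it vanishes (and similarly for $C_M$); this is routine but worth a sentence.
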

It is worthwhile to point out that though  $\mathfrak B$ is a quotient algebra of $\hat{\mathfrak B}$, they have the same module structure on $\C[s,t]$ and Proposition \ref{pop1} also holds for the corresponding $\mathfrak B$-module $\Phi_{\mathfrak B}(\lambda,\a,h(t))$, which can be extended to  the following $\mathfrak{L}$-module.
\begin{defi}\label{defi2.1}\rm For $\lambda\in\C^*,\a,\b,\r\in\C, h(t)\in\C[t]$ and $f(s,t)\in\C[s,t]$, define the action of $\mathfrak{L}$ on $\Phi(\lambda,\a,\b,\r,h(t)):=\C[s,t]$ as follows:
\begin{equation*}
\aligned
&L_m\cdot f(s,t)=\lambda^m(s+h_m(t))f(s-m,t)-m\lambda^m(t-m\a)\frac{\partial}{\partial t}\big(f(s-m,t)\big),\\
&M_m\cdot f(s,t)=\lambda^m(t-m\a)f(s-m,t),\\
&S_m\cdot f(s,t)=\la^m\b f(s-m,t),\\
&I_m\cdot f(s,t)=\la^m\big(\r-m\b \frac{h(t)-h(\a)}{t-\a}\big)f(s-m,t)+m\la^m\b \frac{\partial}{\partial t}\big(f(s-m,t)\big).
\endaligned
\end{equation*}
\end{defi}
\begin{prop}
$\Phi(\lambda,\a,\b,\r,h(t))$ is an $\mathfrak{L}$-module under the action given in Definition \ref{defi2.1}. Moreover, $\Phi(\lambda,\a,\b,\r,h(t))$  is irreducible  if and only if $\a\neq0$ or $\b\neq0$. 
\end{prop}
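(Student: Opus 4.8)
The plan is to split the statement into two parts: first verifying that the formulas in Definition \ref{defi2.1} do define a representation of $\mathfrak{L}$, and then analyzing irreducibility.

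For the module axioms, I would check that every nontrivial bracket relation of $\mathfrak{L}$ is respected by the given operators. The relations involving only $L_m$ and $M_m$ already hold because $\Phi_{\mathfrak B}(\lambda,\a,h(t))$ is known to be a $\mathfrak B$-module (this is the extension of Proposition \ref{pop1} remarked upon just above), so I only need the four remaining families: $[L_m,S_n]=nS_{m+n}$, $[L_m,I_n]=nI_{m+n}$, $[M_m,I_n]=-nS_{m+n}$, and the vanishing brackets $[S_m,S_n]=[S_m,I_n]=[I_m,I_n]=[M_m,S_n]=0$. The commuting relations are immediate: $S_m$ and $M_m$ act by multiplication operators composed with the shift $s\mapsto s-m$, and $I_m$ is a first-order operator in $\partial/\partial t$ with coefficients independent of $s$, so one checks these pairwise commutators collapse after tracking the shifts. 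The relation $[L_m,S_n]=nS_{m+n}$ is a short computation using that $L_m$ contains the term $\lambda^m(s+h_m(t))$ together with the $\partial/\partial t$ piece, and that $h_m(t)$ satisfies $h_{m+n}(t)-$ (appropriate combination) $=$ the required multiple; the key identity is the cocycle-type relation $h_m(t-n\cdot 0)+\dots$ already implicit in the $\hat{\mathfrak B}$ computation. The genuinely new verifications are $[L_m,I_n]=nI_{m+n}$ and $[M_m,I_n]=-nS_{m+n}$: here one must use the precise form of the coefficient $\r-m\b\frac{h(t)-h(\a)}{t-\a}$ in $I_m$, and in particular that $(t-m\a)\cdot\frac{\partial}{\partial t}$ interacts with $\frac{h(t)-h(\a)}{t-\a}$ in a way that reproduces $S_{m+n}=\lambda^{m+n}\b(\cdot)$. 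I expect this to be the main obstacle — reconciling the $\partial/\partial t$ terms and the rational coefficients so that all poles at $t=\a$ cancel and the right-hand side comes out as a \emph{polynomial} operator of the stated form.

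For irreducibility, I would argue exactly as in the proof of part (2) of Proposition \ref{pop1}. Suppose $W$ is a nonzero submodule. First show $W$ contains a nonzero polynomial in $t$ alone (i.e. independent of $s$): starting from any nonzero $f(s,t)\in W$ of minimal $s$-degree, apply $L_0$, whose action on a polynomial of $s$-degree $d$ is $s\cdot(\text{leading part})+\dots$, to reduce the $s$-degree, or alternatively apply $L_m-\lambda^m(\text{scalar})M_m/(t-m\a)$-type combinations; in any case the standard Witt/Heisenberg–Virasoro argument produces a nonzero $g(t)\in W\cap\C[t]$. Next, if $\a\neq0$, use $M_m\cdot g(t)=\lambda^m(t-m\a)g(t)$ for varying $m$ to multiply $g(t)$ by distinct linear factors $t-m\a$, and a Vandermonde/degree argument shows the span of $\{M_m\cdot g\mid m\in\Z\}$ together with $g$ contains a nonzero constant, hence $W=\C[s,t]$. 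If instead $\a=0$ but $\b\neq0$, then $S_m\cdot g(t)=\lambda^m\b\, g(t-m)$ (after the shift $s\mapsto s-m$, which is trivial on $g(t)$) — wait, more precisely $S_m\cdot g(t)=\lambda^m\b g(t)$ since $g$ has no $s$; instead one uses that when $\a=0$ the operator $I_m$ still involves $\frac{\partial}{\partial t}$ and $M_m=\lambda^m t\,f(s-m,t)$, and the pair $\{M_m, I_n\}$ acts with enough freedom (differentiation plus multiplication by $t$) to generate all of $\C[t]$ from $g(t)$, then one recovers the $s$-variable by applying $L_m$'s. Conversely, if $\a=\b=0$, then $M_m\cdot f=\lambda^m t f(s-m,t)$ and $S_m\cdot f=0$, and one checks that $t\C[s,t]$ is a proper nonzero submodule (the $I_m$ action, with $\a=\b=0$, is $I_m\cdot f=\lambda^m\r f(s-m,t)$, which preserves $t\C[s,t]$, and likewise $L_m,M_m,S_m$ do), so $\Phi$ is reducible.

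The step I expect to require the most care is the forward direction of irreducibility in the boundary case $\a=0,\b\neq0$: unlike the $\hat{\mathfrak B}$ situation where the multiplication operators $M_m$ alone suffice, here one must genuinely exploit the commutative ideal spanned by the $S_m,I_m$ and combine the multiplication-by-$t$ action of $M_m$ with the differentiation in $I_m$ to climb down to a constant. I would isolate this as a lemma: the subalgebra generated by $\{M_m,I_m\mid m\in\Z\}$ acts on $\C[s,t]$ with the property that any nonzero $U(\C L_0\oplus\C M_0)$-submodule meeting $\C[t]$ nontrivially must be all of $\C[s,t]$ when $\b\neq0$. Once the reductions to $\C[t]$ and then to constants are in place, surjectivity onto $\C[s,t]$ follows by applying $L_{-m}$ to shift back up in $s$, completing the proof. \QED
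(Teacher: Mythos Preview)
Your overall architecture matches the paper's: reduce the module axioms to the known $\mathfrak B$-structure plus direct verification of the brackets involving $S_m$ and $I_m$, and treat irreducibility via the trichotomy $\a\ne0$ / $\b\ne0$ / $\a=\b=0$. The converse direction ($t\C[s,t]$ is a proper submodule when $\a=\b=0$) is handled correctly.

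There is one concrete error and one place where you are vaguer than necessary. First, $L_0$ acts on $\C[s,t]$ as pure multiplication by $s$ (because $h_0(t)=0$ and the $\partial/\partial t$ term carries the factor $m=0$), so it \emph{raises} rather than lowers the $s$-degree; it cannot be used to land in $\C[t]$. The paper's device in the case $\b\ne0$ is simpler than any of the alternatives you gesture at: since $S_n\cdot f(s,t)=\la^n\b\,f(s-n,t)$, from a nonzero $f\in W$ of minimal $s$-degree one gets $f(s,t)-f(s-1,t)\in W$ of strictly smaller $s$-degree, forcing $f\in\C[t]$ immediately. (For $\a\ne0$ the paper simply invokes the irreducibility of the underlying $\mathfrak B$-module from \cite{CG} rather than redoing the Vandermonde argument.) Second, for the boundary case $\a=0,\ \b\ne0$ the paper is more direct than your Weyl-algebra heuristics: once $f(t)\in W$, the actions of $L_0$ and $M_0$ (multiplication by $s$ and by $t$) give $f(t)\C[s,t]\subseteq W$, and then
$I_1\cdot f(t)=\la\big(\r-\b\,\tfrac{h(t)-h(\a)}{t-\a}\big)f(t)+\la\b f'(t)$
yields $f'(t)\in W$ after subtracting the first summand, which already lies in $f(t)\C[s,t]$. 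Iterating drops the $t$-degree until a nonzero constant sits in $W$, whence $W=\C[s,t]$; no separate step to ``recover the $s$-variable via $L_{-m}$'' is needed.
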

\begin{proof}
For the first statement, in view of the  $\mathfrak B$-action, we know the following relations
\begin{equation*}
\aligned
&L_m\circ L_n-L_n\circ L_m=[L_m,L_n],\\
&L_m \circ M_n-M_n\circ L_m=[L_m,M_n],\\
&M_m\circ M_n-M_n\circ M_m=[M_m,M_n]
\endaligned
\end{equation*} hold on $\Phi(\lambda,\a,\b,\r,h(t))$.  For any $f(s,t)\in \C[s,t]$, explicit calculations show that
\begin{align*}
&L_m\cdot I_n\cdot f(s,t)-I_n\cdot L_m\cdot f(s,t)\\
&=L_m\cdot\big\{\la^n(\r-n\b\frac{h(t)-h(\a)}{t-\a})f(s-n,t)+n\la^n\b\frac{\partial}{\partial t}\big(f(s-n,t)\big)\big\}\\
&\quad-I_n\cdot\big\{\lambda^m(s+h_m(t))f(s-m,t)-m\lambda^m(t-m\a)\frac{\partial}{\partial t}\big(f(s-m,t)\big)\big\}\\
&=n\la^{m+n}\big(\gamma-(m+n)\beta\frac{h(t)-h(\a)}{t-\a}\big)f(s-m-n,t)+n(m+n)\la^{m+n}\beta\frac{\partial}{\partial t}\big(f(s-m-n,t)\big)\\
&=n\big\{\la^{m+n}\big(\gamma-(m+n)\beta \frac{h(t)-h(\a)}{t-\a}\big)f(s-m-n,t)+(m+n)\la^{m+n}\beta\frac{\partial}{\partial t}\big(f(s-m-n,t)\big)\big\}\\
&=[L_m,I_n]\cdot f(s,t),
\end{align*}
\begin{align*}
&L_m\cdot S_n\cdot f(s,t)-S_n\cdot L_m\cdot f(s,t)\\
&=L_m\cdot \big(\la^n\b f(s-n,t)\big)-S_n\cdot\big\{\lambda^m(s+h_m (t))f(s-m,t)-m\lambda^m(t-m\a)\frac{\partial}{\partial t}\big(f(s-m,t)\big)\big\}\\
&=n\la^{m+n}\b f(s-m-n,t)=[L_m,S_n]\cdot f(s,t)
\end{align*}
and 
\begin{align*}
&M_m\cdot I_n\cdot f(s,t)-I_n\cdot M_m\cdot f(s,t)\\
&=M_m\cdot\big\{\la^n(\gamma-n\beta \frac{h(t)-h(\a)}{t-\a}) f(s-n,t)+n\la^n\beta\frac{\partial}{\partial t}\big(f(s-n,t)\big)\big\}\\
&\quad- I_n\cdot\big(\la^m(t-m\a)f(s-m,t)\big)\\
&=\la^{m+n}\big\{(t-m\a)(\mu-n\beta \frac{h(t)-h(\a)}{t-\a})f(s-n-m,t)+n(t-m\a)\beta\frac{\partial}{\partial t}\big(f(s-n-m,t)\big)\big\}\\
&\quad-\la^{m+n}\big\{(t-m\a)(\gamma-n\beta \frac{h(t)-h(\a)}{t-\a})f(s-m-n,t)\\
&\quad+n\beta f(s-m-n,t)+n\beta(t-m\a)\frac{\partial}{\partial t}\big(f(s-m-n,t)\big)\big\}\\
&=-n\la^{m+n}\b f(s-m-n,t)=[M_m,I_n]\cdot f(s,t).
\end{align*}
The last five  relations
\begin{equation*}
\aligned
&I_m\circ S_n-S_n\circ I_m=[I_m,S_n],\\
&I_m\circ I_n-I_n\circ I_m=[I_m,I_n],\\
&M_m\circ M_n-M_n\circ M_m=[M_m,M_n],\\
&M_m\circ S_n-S_n\circ M_m=[M_m,S_n],\\
{\rm and}\quad & S_m\circ S_n-S_n\circ S_m=[S_m,S_n]
\endaligned
\end{equation*}can be checked easily, proving the first statement. 

For the second statement, assume first that  $\a\ne0$ and it follows directly from  \cite[Proposition 3.1]{CG} that $\Phi(\lambda,\a,\b,\r,h(t))$ is irreducible. Assume now that $\b\neq0$. Suppose that $W$ is a nonzero submodule of $\Phi(\lambda,\a,\b,\r,h(t))$. Let $f(s,t)$ be a nonzero polynomial in $W$ with the smallest $deg_{s}(f(s,t))$. By the action of $S_n$, we have $f(s-n,t)\in W$, which  indicates that $deg_{s}(f(s,t))=0,$ i.e., $f(t):=f(s,t)\in\C[t]$. From the actions of $L_0$ and $M_0$, we see that $f(t)\C[s,t]\supseteq W.$ This together with $I_1\cdot f(t)\in W$ forces $f^\prime(t)\in W$, which immediately gives that $W=\C[s,t].$ Hence  $\Phi(\lambda,\a,\b,\r,h(t))$ is irreducible. Finally, consider the case $\a=\b=0$. It is apparent that for any $i\in\N$, $t^i\C[s,t]$ is a submodule of $\Phi(\lambda,0,0,\r,h(t))$ for all $\la\in\C^*,\r\in\C$ and $h(t)\in\C[t]$. And $\Phi(\lambda,0,0,\r,h(t))$ has the following submodule filtration $\cdots \subseteq t^{i+1}\C[s,t]\subseteq t^{i}\C[s,t] \subseteq\cdots \subseteq t\C[s,t]\subseteq \C[s,t]$. For any $0\neq t^ig(s)\in t^{i}\C[s,t]/t^{i+1}\C[s,t]$ and $m\in\Z$, we have
\begin{equation*}
\aligned
&L_m\cdot t^ig(s)\equiv \lambda^m(s+m(h(0)-i))t^ig(s-m)\quad({\rm mod}\,\,t^{i+1}\C[s,t]),\\
&I_m\cdot t^ig(s)=\la^{m}\r t^ig(s-m),\\
&M_m\cdot t^ig(s)=S_m\cdot t^ig(s)=0.
\endaligned
\end{equation*}
Therefore, the quotient module $t^{i}\C[s,t]/t^{i+1}\C[s,t]$ can be viewed as a module over the (centerless) Heisenberg-Virasoro algebra. It follows from \cite{LZ} that 
$t^{i}\C[s,t]/t^{i+1}\C[s,t]$ is irreducible if and only if $h(0)\neq i$ or $\r\neq0$. We complete the proof.
\end{proof}
The following proposition gives a characterization of  isomorphic relations between the  $\mathfrak{L}$-modules constructed in Definition \ref{defi2.1}.
\begin{prop}
Let $\lambda, \lambda^{\prime}\in\mathbb{C}^*,\a,\a^{\prime},\b, \b^{\prime},\r,\r^\prime\in\mathbb{C},h(t),q(t)\in\C[t]$. Then
\begin{align*}\label{2.3}
\Phi(\lambda,\a,\b,\r,h(t)) &\cong \Phi(\lambda^{\prime},\a^{\prime},\b^{\prime},\r^\prime,q(t))\Longleftrightarrow (\lambda,\a,\b,\r,h(t))=(\lambda^{\prime},\a^{\prime},\b^{\prime},\r^\prime,q(t)).
\end{align*}
\end{prop}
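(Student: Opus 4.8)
The converse implication is immediate, so the content is the forward direction: given an $\mathfrak L$-module isomorphism $\psi\colon\Phi(\lambda,\a,\b,\r,h(t))\to\Phi(\lambda',\a',\b',\r',q(t))$, one must deduce that the two parameter tuples coincide. The plan is to first show that $\psi$ is forced to be multiplication by a nonzero scalar, and then to read off each of the five parameters by applying the relevant intertwining identity to the constant polynomial $1$.

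First I would record the structural observation underlying the rank-one freeness: since $h_0(t)=0$, the operators $L_0$ and $M_0$ act on each $\Phi$ as multiplication by $s$ and by $t$, so $U(\C L_0\oplus\C M_0)$ acts through $\C[s,t]$ by multiplication and each module is free of rank one with generator the constant $1$. Then $\psi$ is in particular an isomorphism of $\C[s,t]$-modules for this multiplication action, so $\psi(1)$ is a free generator of $\C[s,t]$ over itself, i.e.\ a unit $c\in\C^*$, whence $\psi(f)=\psi\big(f(L_0,M_0)\cdot 1\big)=f(L_0,M_0)\cdot c=cf$ for all $f\in\C[s,t]$; that is, $\psi$ is multiplication by $c$.

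Next I would substitute $f=1$ into $\psi(L_m\cdot f)=L_m\cdot\psi(f)$ and cancel $c$, obtaining $\lambda^m(s+h_m(t))=(\lambda')^m(s+q_m(t))$ for every $m\in\Z$. The coefficient of $s$ gives $\lambda^m=(\lambda')^m$ for all $m$, hence $\lambda=\lambda'$ (take $m=1$), and then $h_m=q_m$ for all $m$, so in particular $h=h_1=q_1=q$. Feeding $f=t$ into the $L_m$-identity (or using the $M_m$-identity) and comparing the derivative terms then forces $\a=\a'$; alternatively one can obtain $\lambda=\lambda'$, $\a=\a'$, $h=q$ in one stroke by restricting $\psi$ to the subalgebra $\mathfrak B$ spanned by the $L_m,M_m$ and invoking the $\mathfrak B$-analogue of Proposition~\ref{pop1}(3). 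Finally, $f=1$ in $\psi(S_m\cdot f)=S_m\cdot\psi(f)$ gives $\b=\b'$, and $f=1$ in $\psi(I_m\cdot f)=I_m\cdot\psi(f)$, with the already-established equalities substituted so that the two $\b\,\partial_t$ contributions cancel, isolates $\r=\r'$.

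I do not anticipate a real obstacle. The only conceptual step is the first one --- recognizing that a module isomorphism must send the distinguished generator of $\C[s,t]$ to a unit and is therefore scalar multiplication; after that every parameter falls out of a one-line coefficient comparison. The sole point requiring a little care is the $I_m$-identity, where one must use that $\lambda,\a,\b,h$ have already been matched in order for the derivative terms to cancel exactly, so that the comparison genuinely pins down $\r$.
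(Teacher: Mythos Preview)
Your proposal is correct and follows essentially the same route as the paper: show the isomorphism is multiplication by a nonzero scalar (via the $L_0,M_0$-action and invertibility of $\psi(1)$), then read off the parameters. The paper streamlines slightly by citing Proposition~\ref{pop1}(3) to obtain $(\lambda,\a,h)=(\lambda',\a',q)$ in one step and by using $S_0\cdot 1=\b$ and $I_0\cdot 1=\r$ (where the polynomial terms vanish) rather than general $m$, but the argument is the same.
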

\begin{proof} 
The sufficiency is obvious and it suffices to show the necessity. Let $\varphi:\Phi(\lambda,\a,\mu, h(t))\to\Phi(\lambda^{\prime},\a^{\prime},\mu^{\prime},q(t))$ be an isomorphism of $\mathfrak{L}$-modules. Viewing $\Phi(\lambda,\a,\b, h(t))$ and $\Phi(\lambda^{\prime},\a^{\prime},\b^{\prime},q(t))$ as $\mathfrak{B}$-modules, we get $(\lambda,\a,h(t))=(\lambda^{\prime},\a^{\prime},q(t))$ by Proposition \ref{pop1} (3). For any  $f(s,t)\in\C[s,t]$,  we have
\begin{equation*}
\varphi(f(s,t))=\varphi(f(L_0,M_{0})\cdot1)=f(L_0,M_0)\cdot\varphi(1)=f(s,t)\varphi(1). \end{equation*}
In particular,  we get $1=\varphi\big(\varphi^{-1}(1)\big)=\varphi^{-1}(1)\varphi(1),$ which means that $\varphi(1)$ is  invertible  in $\C[s,t]$. Since nonzero constants are the only invertible elements in $\C[s,t],$  it follows that $\varphi(1)\in\C^*$. From
\begin{equation*}\label{add2}
\aligned
&\b\varphi(1)=\varphi(\b)=\varphi(S_0\cdot1)=S_0\cdot \varphi(1)=\varphi(1)\b^{\prime},\\
&\r\varphi(1)=\varphi(\r)=\varphi(I_0\cdot1)=I_0\cdot \varphi(1)=\varphi(1)\r^{\prime},
\endaligned
\end{equation*}
we infer that $(\b,\r)=(\b^{\prime},\r^{\prime})$. This completes the proof.
\end{proof}

\section{Main result}
The main result of the present paper is to classify all modules over $\mathfrak{L}$ whose restrictions to $U(\C L_0 \oplus\C M_0)$ are
free of rank one. Before presenting the main result, we first give a lemma, which  can be easily shown  by induction on $i$.
\begin{lemm}\label{lemma1}
The following formulae  hold in $U(\mathfrak{L}),$ for any $m\in\Z$ and $i\in\N,$
\begin{align*}
I_m L_0^i&=(L_0-m)^iI_m,\quad S_m L_0^i=(L_0-m)^iS_m,\quad I_m M_0^i=M_0^i I_m+im M_0^{i-1}S_m.
\end{align*}
\end{lemm}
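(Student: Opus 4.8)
The plan is to prove that Lemma \ref{lemma1} provides the three commutation identities by a straightforward induction on $i$, using only the defining brackets of $\mathfrak{L}$ together with the elementary fact that for elements $x,y$ in an associative algebra one has $x y^i = y^i x + \sum_{k} y^{k}[x,y]y^{i-1-k}$ when $[x,y]$ is central-like relative to $y$. Concretely, in $U(\mathfrak{L})$ we have $[I_m,L_0]=[L_0,I_m]^{-}$; from $[L_0,I_m]=mI_m$ (the bracket $[L_m,I_n]=nI_{m+n}$ with $m\mapsto 0$, $n\mapsto m$) we get $I_m L_0 = L_0 I_m - m I_m = (L_0-m)I_m$, and similarly $[L_0,S_m]=mS_m$ gives $S_m L_0=(L_0-m)S_m$. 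For the $M_0$ identity, the relevant bracket is $[M_m,I_n]=-nS_{m+n}$, i.e.\ $[I_m,M_0]= -[M_0,I_m]= -\,(-\,0\cdot S_m)\cdots$; more carefully $[M_0,I_m]=-mS_m$ so $[I_m,M_0]=mS_m$, giving $I_m M_0 = M_0 I_m + m S_m$ as the base case $i=1$.

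The first two identities then follow by an immediate induction: assuming $I_m L_0^i=(L_0-m)^i I_m$, multiply on the right by $L_0$ and insert $I_m L_0 = (L_0-m)I_m$ to get $I_m L_0^{i+1}=(L_0-m)^i I_m L_0 = (L_0-m)^{i+1} I_m$, and verbatim the same argument with $S_m$ in place of $I_m$. The only identity requiring genuine (though still routine) bookkeeping is the third. Assuming $I_m M_0^i = M_0^i I_m + i m M_0^{i-1} S_m$, I multiply on the right by $M_0$:
\begin{equation*}
I_m M_0^{i+1} = M_0^i (I_m M_0) + i m M_0^{i-1} S_m M_0 = M_0^i(M_0 I_m + m S_m) + i m M_0^{i-1}(S_m M_0).
\end{equation*}
Here I use the base case $I_m M_0 = M_0 I_m + m S_m$ and the fact that $S_m$ commutes with $M_0$ (since $[M_m,M_n]=0$ forces $[M_0,S_m]=0$; indeed $S_n$ lies in the commutative ideal spanned by the $I$'s and $S$'s, and $[M_0,S_m]=0$). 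Thus $S_m M_0 = M_0 S_m$, and the right-hand side collapses to $M_0^{i+1} I_m + m M_0^i S_m + i m M_0^i S_m = M_0^{i+1} I_m + (i+1) m M_0^i S_m$, which is exactly the claimed formula with $i$ replaced by $i+1$.

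I expect no real obstacle here: the whole content is that the relevant "error terms" ($mI_m$, $mS_m$, $mS_m$ respectively) interact trivially enough with the powers being pushed past them — $I_m$ and $S_m$ scale $L_0$-powers uniformly, and $S_m$ is central with respect to $M_0$ — so every induction step is a one-line manipulation. The mild care needed is only in getting the signs of the brackets right ($[L_0,I_m]=mI_m$ versus $[I_m,L_0]=-mI_m$, and $[I_m,M_0]=mS_m$) and in recognizing that $[M_0,S_m]=0$, which is immediate from the structure of $\mathfrak{L}$ noted in the Preliminaries. Hence the lemma follows by induction on $i$ for each of the three formulae independently.
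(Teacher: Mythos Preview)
Your proof is correct and is exactly the induction on $i$ that the paper indicates (the paper gives no further details). One small expository point: the vanishing $[M_0,S_m]=0$ is not a consequence of $[M_m,M_n]=0$ or of the $I,S$-ideal being commutative, but simply of the fact that the defining relations of $\mathfrak{L}$ list only the nontrivial brackets; your final justification (``immediate from the structure of $\mathfrak{L}$'') is the right one.
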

\begin{theo}\label{theo1} Let $M$ be an $\mathfrak{L}$-module such that the restriction of $U(\mathfrak{L})$ to $U(\C L_0 \oplus\C M_0)$ is  free of rank one. Then $M\cong \Phi(\lambda,\a,\b,\r,h(t))$ for some $\lambda\in\C^*,\ \a,\b,\r\in\C,\ h(t)\in\C[t]$.
\end{theo}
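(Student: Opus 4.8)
The plan is to fix a generator $v$ of $M$ as a free $U(\C L_0\oplus\C M_0)=\C[L_0,M_0]$-module and identify $M$ with $\C[s,t]$ via $v\mapsto 1$, so that $L_0$ and $M_0$ act as multiplication by $s$ and $t$. The first step is to show that the whole $\mathfrak L$-action on $\C[s,t]$ is then of the ``shift plus $t$-derivative'' type appearing in Definition \ref{defi2.1} and is determined by the four families $\ell_m:=L_m\cdot1$, $\mu_m:=M_m\cdot1$, $\psi_m:=S_m\cdot1$, $\phi_m:=I_m\cdot1$ of polynomials. Indeed, $[L_0,X_m]=mX_m$ for $X_m\in\{L_m,M_m,S_m,I_m\}$ gives $X_mL_0^i=(L_0-m)^iX_m$ in $U(\mathfrak L)$, and, combined with Lemma \ref{lemma1} together with the analogous identities $M_mM_0^i=M_0^iM_m$, $S_mM_0^i=M_0^iS_m$, $L_mM_0^i=M_0^iL_m-imM_0^{i-1}M_m$ (all proved as in Lemma \ref{lemma1}), applying these operators to $v=1$ yields, for all $f\in\C[s,t]$,
\begin{align*}
&M_m\cdot f = \mu_m\, f(s-m,t),\qquad S_m\cdot f = \psi_m\, f(s-m,t),\\
&L_m\cdot f = \ell_m\, f(s-m,t)-m\mu_m\, \frac{\partial}{\partial t}\big(f(s-m,t)\big),\\
&I_m\cdot f = \phi_m\, f(s-m,t)+m\psi_m\, \frac{\partial}{\partial t}\big(f(s-m,t)\big).
\end{align*}

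Next I would restrict $M$ to the subalgebra $\mathfrak B=\mathrm{span}_{\C}\{L_m,M_m\mid m\in\Z\}$. Since $\C L_0\oplus\C M_0\subseteq\mathfrak B$, the restriction is again free of rank one over $U(\C L_0\oplus\C M_0)$, so by Proposition \ref{pop1}(1) (in the $\mathfrak B$-version noted right after that proposition) there is a $\mathfrak B$-module isomorphism $M|_{\mathfrak B}\cong\Phi_{\mathfrak B}(\lambda,\a,h(t))$ for some $\lambda\in\C^*$, $\a\in\C$, $h(t)\in\C[t]$. This isomorphism is $\C[L_0,M_0]$-linear between two free rank-one $\C[s,t]$-modules, hence multiplication by a unit of $\C[s,t]$, i.e.\ by a nonzero scalar; dividing by that scalar turns it into the identity map of $\C[s,t]$, which forces the $\mathfrak B$-action on $M=\C[s,t]$ to be literally the one in Definition \ref{defi2.1}. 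In particular $\ell_m=\lambda^m(s+h_m(t))$ and $\mu_m=\lambda^m(t-m\a)$, and only $\psi_m,\phi_m$ remain.

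To compute these I would feed the remaining brackets into the shift formulas above (evaluated at $1$). First, $[M_n,S_m]=0$ with $n\ne0$ forces $\psi_m$ to be independent of $s$ (the point being to use $M_n$ with $n\ne0$, since $[M_0,\cdot]$ gives nothing); then $[L_n,S_0]=0$ gives $\psi_0=\b$ for a constant $\b$, and $[L_{-m},S_m]=mS_0$ reduces, after using the known $\ell_{-m},\mu_{-m}$, to $\frac{d}{dt}\big[(t+m\a)\psi_m(t)\big]=\lambda^m\b$, whose unique polynomial solution is $\psi_m=\lambda^m\b$. With all $\psi_m$ in hand, $[M_n,I_m]=-mS_{n+m}$ ($n\ne0$) forces $\phi_m$ independent of $s$, $[L_n,I_0]=0$ gives $\phi_0=\r$ for a constant $\r$, and $[L_{-m},I_m]=mI_0$ reduces to $\frac{d}{dt}\big[(t+m\a)\phi_m(t)\big]=\lambda^m\r+\lambda^m\b\, h_{-m}'(t)$. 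Integrating and demanding that the solution be a polynomial (divisibility by $t+m\a$), together with the identities $h_{-m}(-m\a)=-mh(\a)$ and $h_{-m}(t)+mh(\a)=-m\,\frac{h(t)-h(\a)}{t-\a}(t+m\a)$, pins down $\phi_m=\lambda^m\big(\r-m\b\,\frac{h(t)-h(\a)}{t-\a}\big)$.

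These are exactly the coefficients in Definition \ref{defi2.1}, so the $\mathfrak L$-action on $M=\C[s,t]$ agrees term-by-term with that of $\Phi(\lambda,\a,\b,\r,h(t))$, whence $M\cong\Phi(\lambda,\a,\b,\r,h(t))$; no separate check that this is a module is needed, since that was proved earlier. I expect the main obstacle to be this last step: converting the commutator identities into the two ODEs and verifying that polynomial solvability produces precisely the coefficients of Definition \ref{defi2.1} --- this is where the specific shape of $h_m(t)$ (encoded in the two displayed identities for $h_{-m}$) and the genuine coupling between the super-translations $M_m$ and the Kac-Moody currents $S_m,I_m$ enter. A smaller point to be careful with is the normalization in the second step: one must rescale the $\mathfrak B$-isomorphism to the identity, so that the $\mathfrak B$-action itself, and not merely its isomorphism class, is the one in Definition \ref{defi2.1}.
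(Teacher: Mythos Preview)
Your proof is correct and follows the same overall strategy as the paper's: pull the $\mathfrak B$-action from Proposition~\ref{pop1}, reduce via Lemma~\ref{lemma1} and its analogues to the polynomials $S_m\cdot1$ and $I_m\cdot1$, kill their $s$-dependence with $[M_n,\,\cdot\,]$, and then finish using $[L_n,\,\cdot\,]$-relations. The one substantive divergence is in pinning down $\phi_m=I_m\cdot1$ for $m\ne0$: the paper first uses $[I_n,I_m]=0$ to tie all $b_m'$ to $b_1'$ (which forces a case split on $\beta$), and then solves a two-term recursion for the constants $A_m$ coming from $[L_{\pm1},I_m]$; you instead use the single relation $[L_{-m},I_m]=mI_0$, integrate the resulting ODE for $(t+m\a)\phi_m(t)$, and let divisibility by $t+m\a$ together with the identity $h_{-m}(-m\a)=-mh(\a)$ fix the integration constant. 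Your route is a bit more direct and handles $\b=0$ and $\b\ne0$ uniformly; the paper's route trades this for not needing the factorisation identity $h_{-m}(t)+mh(\a)=-m\dfrac{h(t)-h(\a)}{t-\a}(t+m\a)$.
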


\begin{proof} Assume that $M=U(\C L_0 \oplus\C M_0)$. Viewed as a $\mathfrak B$-module, following from  Proposition \ref{pop1} (1), we have
\begin{align}
\label{111}L_m\cdot f(L_0,M_0)&=\lambda^m\big(L_0+h_m (M_0)\big)f(L_0-m,M_0)\nonumber\\
&\ \ \ \ -m\la^m (M_0-m\a)\frac{\partial}{\partial M_0}\big(f(L_0-m,M_0)\big),\\
\label{222}M_m\cdot f(L_0,M_0)&=\lambda^m (M_0-m\a) f(L_0-m,M_0),
\end{align}
where $f(L_0,M_0)\in U(\C L_0 \oplus\C M_0),\lambda\in\C^*,\a\in\C,m\in\Z$ and $h(t)\in\C[t]$.

For any $m\in\Z$, denote $a_m(L_0,M_0):=S_m\cdot1$ and $b_m(L_0,M_0):=I_m\cdot1$, respectively.  Take any
$$f(L_0, M_0)=\sum_{j,k\in\N}c_{j,k}L_{0}^j M_{0}^k\in U(\C L_0\oplus \C M_0),\quad {\rm where}\ c_{j,k}\in\C.$$ By  Lemma \ref{lemma1}, we have
\begin{align*}
S_m\cdot f(L_0, M_0)&=S_m\cdot \sum_{j,k\in\N}c_{j,k}L_{0}^j M_{0}^k=\sum_{j,k\in\N}c_{j,k}(L_{0}-m)^{j} S_m\cdot M_{0}^{k}\nonumber\\
&=\sum_{j,k\in\N}c_{j,k}(L_{0}-m)^{j} M_0^k a_m(L_0,M_0)=f(L_0-m, M_0) a_m(L_0,M_0)
\end{align*}
and
\begin{align*}
I_m\cdot f(L_0, M_0)&=I_m\cdot \sum_{j,k\in\N}c_{j,k}L_{0}^j M_{0}^k=\sum_{j,k\in\N}c_{j,k}(L_{0}-m)^{j} I_m\cdot M_{0}^{k}\nonumber\\
&=\sum_{j,k\in\N}c_{j,k}(L_{0}-m)^{j} M_{0}^{k}b_m(L_0,M_0)+\sum_{j,k\in\N}c_{j,k}k m (L_{0}-m)^{j}M_{0}^{k-1}a_m(L_0,M_0)\nonumber\\
&=f(L_0-m, M_0)b_m(L_0,M_0)+m\frac{\partial}{\partial M_0}\big(f(L_0-m, M_0)\big)a_m(L_0,M_0).
\end{align*}
Thus, the actions of $S_m$ and $I_m$ on $M$ are completely determined by $a_m(L_0,M_0)$ and $b_m(L_0,M_0)$, respectively. For this reason, in what follows we only need to determine $a_m(L_0,M_0)$ and $b_m(L_0,M_0)$ for all $m\in\Z$.
\begin{clai}$\b:=a_0(L_0,M_0)\in\C,\r:=b_0(L_0,M_0)\in\C.$\end{clai} 
Equation $[M_n,S_0]\cdot1=0$ along with \eqref{222}  gives
\begin{align*}
\la^{n} (M_0-n\a)a_{0}(L_0-n,M_0)=\la^n (M_0-n\a)a_{0}(L_0,M_0),
\end{align*}
which yields that $a_0(M_0):=a_0(L_0,M_0)\in\C[M_0]$. Combining this with $[L_1,S_0]\cdot1=0$ and  \eqref{111}, we obtain $\la(M_0-\a)a_{0}^{\prime}(M_0)=0$. This shows that $\b:=a_0(M_0)\in\C$. Similarly, using \eqref{111}, \eqref{222}, $[M_n,I_0]\cdot1=0$ and $[L_1,I_0]\cdot1=0$, one can easily get $\r:=b_0(L_0,M_0)\in\C$.
\begin{clai}$a_m(L_0,M_0)=\la^m \b$ {\rm for all} $m\in\Z.$\end{clai}
By applying $[M_n,S_m]=0$ to $1$, one has
\begin{align*}
\la^n(M_0-n\a)a_m(L_0-n,M_0)&=\la^n(M_0-n\a)a_m(L_0,M_0),
\end{align*}
which forces  $a_m(M_0):=a_m(L_0,M_0)\in\C[M_0]$ for all $m\in\Z$. This entails us to assume that $a_m(M_0)=\sum_{i=0}^{k_m}c_{m,i}M_0^i$ with $c_{m,i}\in\C$ and $m\in\Z$. Now we compute
\begin{align*}
[L_{-m},S_m]\cdot1&=\la^{-m}ma_m(M_0)+\la^{-m}m(M_0+m\a)a_m^\prime(M_0)\\
&=\la^{-m}m\sum_{i=0}^{k_m}c_{m,i}M_0^i+\la^{-m}m(M_0+m\a)\sum_{i=0}^{k_m}ic_{m,i}M_0^{i-1}\\
&=\la^{-m}m(k_m+1)c_{m,k_m}M_0^{k_m}\quad({\rm mod}\,\,\oplus_{i=0}^{k_{m}-1}\C M_{0}^{i})\\
&=mS_0\cdot1=m\b.
\end{align*}
Hence it follows that $k_m=0$ and  $a_m(M_0)=c_{m,0}=\la^m\b$ for all $m\in\Z^*$. Together with $a_0(M_0)=\b$, we see the claim holds.
\begin{clai} $b_m(L_0,M_0)=\la^m\big(\r-m\b \frac{h(t)-h(\a)}{t-\a}\big)\  {\rm for\ all}\ m\in\Z.$ \end{clai}
Owing to Claim $2$ and $[M_n,I_m]\cdot1=-mS_{m+n}\cdot1$, we obtain  $b_m(L_0-n,M_0)=b_m(L_0,M_0)$ for all $m\in\Z$, forcing $b_m(M_0):=b_m(L_0,M_0)\in\C[M_0]$ for all $m\in\Z$. Explicit calculations show that
\begin{align}\label{3.9}
[L_n,I_m]\cdot1&=L_n \cdot b_m(M_0)-I_m\cdot \la^n\big(L_0+h_n(M_0)\big)\nonumber\\
&=m\la^n b_m(M_0)-n\la^n(M_0-n\a)b_m^\prime(M_0)-m\la^{m+n}\b h_n^\prime(M_0)\nonumber\\
&=mI_{m+n}\cdot1=m b_{m+n}(M_0)
\end{align}
and 
\begin{align}\label{3.10}
[I_n,I_m]\cdot1&=I_n \cdot b_m(M_0)-I_m\cdot b_n(M_0)=n\la^n\b b_m^\prime(M_0)-m\la^m\b b_n^\prime(M_0)=0.
\end{align}
In the case $\b=0$, \eqref{3.9} simply becomes 
$$m\la^n b_m(M_0)-n\la^n(M_0-n\a)b_m^\prime(M_0)=m b_{m+n}(M_0).$$
Writing $b_m(M_0)=\sum_{i=0}^{l_m}d_{m,i}M_0^i$ with $d_{m,i}\in\C$ and inserting this into the above equality, we obtain that $l_m=0$ and $b_m(M_0)=\la^m\r$ for all $m\in\Z$. Next consider the case $\b\neq0$. By \eqref{3.10}, we have $n\la^n b_m^\prime(M_0)=m\la^m b_n^\prime(M_0)$. Letting  $n=1$, we have
\begin{align}\label{3.11}
b_m(M_0)=m\la^{m-1}b_1(M_0)+A_m
\end{align}
with $A_m\in\C, A_1=0$ and $A_0=\r$. 
By taking  $(m,n)=(-1,1)$ and  $(m,n)=(1,-1)$ in \eqref{3.9}, we respectively obtain
\begin{align}\label{2.3}
\b h_{1}^{\prime}(M_0)=-\lambda^{-1}b_{1}(M_0)-\lambda^{-1}(M_0-\a)b_{1}^{\prime}(M_0)+\lambda A_{-1}-\r
\end{align}
and
\begin{align*}
\b h_{-1}^{\prime}(M_0)=\lambda^{-1}b_{1}(M_0)+\lambda^{-1}(M_0+\a)b_{1}^{\prime}(M_0)-\r.
\end{align*}
It follows from these, \eqref{3.11} and  by choosing  $n=1$ and $n=-1$ in \eqref{3.9} that
\begin{align*}
&A_{m+1}=\la A_m+\la^{m+1}\r-\la^{m+2} A_{-1},\nonumber\\
&A_{m-1}=\la^{-1} A_m+\la^{m-1}\r,\ \ \ \ \forall m\in\Z^*, 
\end{align*}
from which one can inductively infer that 
\begin{align}\label{3.12}
A_m=(1-m)\lambda^{m}\r,\ \ \ \ \forall m\in\Z. 
\end{align} In particular, $A_{-1}=2\la^{-1}\r$. Now \eqref{2.3} becomes
$$\b h^{\prime}(M_0)=-\lambda^{-1}b_{1}(M_0)-\lambda^{-1}(M_0-\a)b_{1}^{\prime}(M_0)+\r,$$
which means $\b h(M_0)=-\la^{-1}b_1(M_0)(M_0-\a)+\r M_0+\theta$ for some $\theta\in\C$. Clearly, $\theta=\b h(\a)-\r\a$. Thus, we have
\begin{align*}
b_1(M_0)=\frac{\la\big(\r M_0+\b h(\a)-\r\a-\b h(M_0)\big)}{M_0-\a}.
\end{align*}
Combining this with \eqref{3.11} and \eqref{3.12}, we conclude that 
$$b_m(M_0)=\la^m\big(\r-m\b \frac{h(M_0)-h(\a)}{M_0-\a}\big),\ \ \ \ \forall m\in\Z.$$ This completes the proof of Claim 3 and thereby Theorem \ref{theo1}.
\end{proof}
\begin{rema}
As for any non-zero element  $v\in\Phi(\lambda,\a,\b,\r,h(t))$, there exists a positive integer $m$ such that $L_m$ acts non-trivially on $v$  and hence $\Phi(\lambda,\a,\b,\r,h(t))$ is not isomorphic to a  restricted module in \cite{LS}. Then $\Phi(\lambda,\a,\b,\r,h(t))$ is a new non-weight $\mathfrak L$-module.

\end{rema}


\end{document}